\newtheorem{thm}{Theorem}[section]
\newtheorem{lm}[thm]{Lemma}
\newtheorem{prop}[thm]{Proposition}
\newtheorem{coro}[thm]{Corollary}
\newtheorem{rmk}[thm]{Remark}
\newcommand{\figref}[1]{\hyperref[#1]{Figure \ref{#1}}}
\newcommand{\lemref}[1]{\hyperref[#1]{Lemma \ref{#1}}}
\newcommand{\thmref}[1]{\hyperref[#1]{Theorem \ref{#1}}}
\newcommand{\conjref}[1]{\hyperref[#1]{Conjecture \ref{#1}}}
\newcommand{\propref}[1]{\hyperref[#1]{Proposition \ref{#1}}}
\newcommand{\corref}[1]{\hyperref[#1]{Corollary \ref{#1}}}
\newcommand{\defref}[1]{\hyperref[#1]{Definition \ref{#1}}}
\newcommand{\rmkref}[1]{\hyperref[#1]{Remark \ref{#1}}}
\newcommand{\qref}[1]{\hyperref[#1]{Question \ref{#1}}}
\newcommand{\secref}[1]{\hyperref[#1]{\S\ref{#1}}}
\newcommand{\appref}[1]{\hyperref[#1]{Appendix \ref{#1}}}
\newcommand{\R}{\mathbf{R}}
\newcommand{\C}{\mathbf{C}}
\newcommand{\h}{\mathbf{H}}
\newcommand{\Q}{\mathbf{Q}}
\newcommand{\Z}{\mathbf{Z}}
\newcommand{\N}{\mathbf{N}}
\title{The Brjuno and Wilton Functions}
\author{Claire Burrin}
\author{Seul Bee Lee}
\author{Stefano Marmi}
\thanks{C.B acknowledges the support of SNSF grant number 201557 and the hospitality of SNS Pisa. S.L acknowledges the support of the Institute for Basic Science (IBS-R003-D1) and the support of BK21 SNU Mathematical Sciences Division.
S.M. acknowledges the  support of the research project ``Dynamics and Information Research Institute -
Quantum Information, Quantum Technologies'' within the agreement between UniCredit Bank and Scuola Normale
Superiore.}
\newcommand{\Addresses}{{
  \bigskip
  \footnotesize

  Claire Burrin, \textsc{Institute of Mathematics, University of Zurich, Zurich, Switzerland.}\par\nopagebreak
  \textit{E-mail address}: \texttt{claire.burrin@math.uzh.ch}

  \medskip

  Seul Bee Lee, \textsc{Department of Mathematical Sciences, Seoul National University, 1, Gwanak-ro, Gwanak-gu, Seoul 08826, Korea}\par\nopagebreak
  \textit{E-mail address}: \texttt{seulbee.lee@snu.ac.kr}

  \medskip
  
  Stefano Marmi, \textsc{Classe di Scienze, Scuola Normale Superiore, Pisa, Italy.}\par\nopagebreak
  \textit{E-mail address}: \texttt{stefano.marmi@sns.it}
}
}
\begin{document}
\maketitle

\begin{abstract}
    The Brjuno and Wilton functions bear a striking resemblance, despite their very different origins; while the Brjuno function $B(x)$ is a fundamental tool in one-dimensional holomorphic dynamics, the Wilton function $W(x)$ stems from the study of divisor sums and self-correlation functions in analytic number theory. We show that these perspectives are unified by the semi-Brjuno function $B_0(x)$. Namely, $B(x)$ and $W(x)$ can be expressed in terms of the even and odd parts of $B_0(x)$, respectively, up to a bounded defect. Based on numerical observations, we further analyze the arising functions $\Delta^+(x) = B^+(x) - 2B_0^+(x)$ and $\Delta^-(x) = W^-(x) - 2B_0^-(x)$, the first of which is Hölder continuous whereas the second exhibits discontinuities at rationals, behaving similarly to the classical popcorn function.
\end{abstract}

\section{Introduction}

Complex dynamics, the study of iterates $(f^n(z))_{n\in\N}$ for a holomorphic function $f:\C\to\C$, originated from Schr\"oder's question of whether a holomorphic function $f$ with fixed point $z=0$ is locally conjugate to the linear map $z\mapsto f'(0)\, z$. This was answered positively in the late 19th century by Koenigs \cite{Koenigs1884} when $\lambda:=f'(0)$ has modulus $|\lambda|\in(0,1)$ or $|\lambda|>1$. In the indifferent case $\lambda=e^{2\pi ix}$ where $x$ is irrational,
Siegel \cite{Siegel1942} showed that if $x$ is a Diophantine number, then $f$ is linearizable. Brjuno \cite{Brjuno1965} introduced a weaker condition on $x$ (which we review in the next paragraph) under which $f$ is again locally linearizable. Yoccoz \cite{Yoccoz1995} showed that Brjuno's condition is, in fact, sharp: the quadratic map $f(z)=z^2+\lambda z$ is locally linearizable at $z=0$ if and only if $x$ is a Brjuno number. 
The Brjuno condition is known to be optimal also for other analytic small divisor problems \cite{Davie1994, BG01, Yoccoz2002}.

Following \cite{Yoccoz1995}, the set of Brjuno numbers can be characterized as follows. The Brjuno function $B:\R\setminus\Q\to \R\cup\{\infty\}$ is the 1-periodic function that is defined on the set of irrationals in $(0,1)$ by
\begin{align}\label{eq:seriesB}
    B(x) = \sum_{j=0}^\infty (\{x\} A(\{x\})\cdots A^{j-1}(\{x\})) \log \frac{1}{A^j(\{x\})},
\end{align}
where $\{x\}=x-\lfloor x\rfloor$ denotes the fractional part, $A(x)=\{\frac{1}{x}\}$ is the Gauss map, and we apply the convention that the 0-th summand is $-\log x$. In the language of continued fractions, if $x=[a_0,a_1,\ldots , a_k,\ldots]$ and $\frac{p_j}{q_j}=[a_0,a_1,\ldots , a_j]$, 
the term in parenthesis can be expressed as
$\{x\} A(\{x\})\cdots A^{j-1}(\{x\}) = |q_j x - p_j|,$ that is, in terms of the approximation of $x$ by its $j$-th convergent. Equivalently, the Brjuno function is characterized by the functional equations
\begin{align}\label{eq:fteqB}
    B(x+1)=B(x) \quad \text{and}\quad B(x) = -\log x + x B(1/x) \text{ when } x\in(0,1).
\end{align}
Brjuno numbers are the irrationals $x$ at which the Brjuno function $B(x)$ converges. They can also be characterized by imposing that $\sum_{k=0}^{+\infty}\frac{\log q_{k+1}}{q_k}<+\infty$. 


In his 1987 work Yoccoz actually used a modification of the Brjuno function, $B_{1/2}$, that builds on the nearest integer continued fraction map $A_{1/2}(x)=\Vert \frac{1}{x}\Vert_{\Z}$, where $\Vert x\Vert_{\Z}=\min_{p\in\Z}|x-p|$, in place of the Gauss map $A$. 
Yoccoz showed that $B_{1/2}$ and $B$ coincide up to a bounded defect, i.e., 
\begin{align*}
    |B(x)-B_{1/2}(x)| \leq C
\end{align*}
for a uniform constant $C$, thus both functions are finite if and only if their argument is a Brjuno number. 
Since then,  various Brjuno functions have been studied, typically based on different continued fraction developments, with large families shown to coincide with the standard Brjuno function $B$ up to a bounded defect, e.g., \cite{MMY97,LuzziMarmiNakadaNatsui2010,LM21}.  

One variation on the Brjuno function, in fact, long predates it. In 1933, Wilton \cite{Wilton1933} showed that for any irrational number $x$, the series
\begin{align*}
    \sum_{n=1}^\infty \frac{d(n)}{n} \sin(2\pi n x),
\end{align*}
where $d(n)$ is the number of positive divisors of $n$, converges if and only if the `Wilton function'
\begin{align}\label{eq:seriesW}
    W(x)=  \sum_{j=0}^\infty (-1)^j (\{x\} A(\{x\})\cdots A^{j-1}(\{x\})) \log \frac{1}{A^j(\{x\})}
\end{align}
converges. Wilton numbers, i.e., irrationals $x$ for which $W(x)$ converges, also appear in the literature as the points of differentiability of the multiplicative self-correlation function 
$$
\int_0^1 \{t\}\{xt\} \frac{dt}{t^2},
$$
for the fractional part map $t\mapsto\{t\}$, introduced in the context of Nyman's criterion for the Riemann hypothesis \cite{BDBLS05,BM13}. They can also be characterized by asking that $\sum_{k=0}^{+\infty}(-1)^k \frac{\log q_{k+1}}{q_k}<+\infty$. 

\begin{figure}[h]
\centering
\includegraphics[scale=.25]
{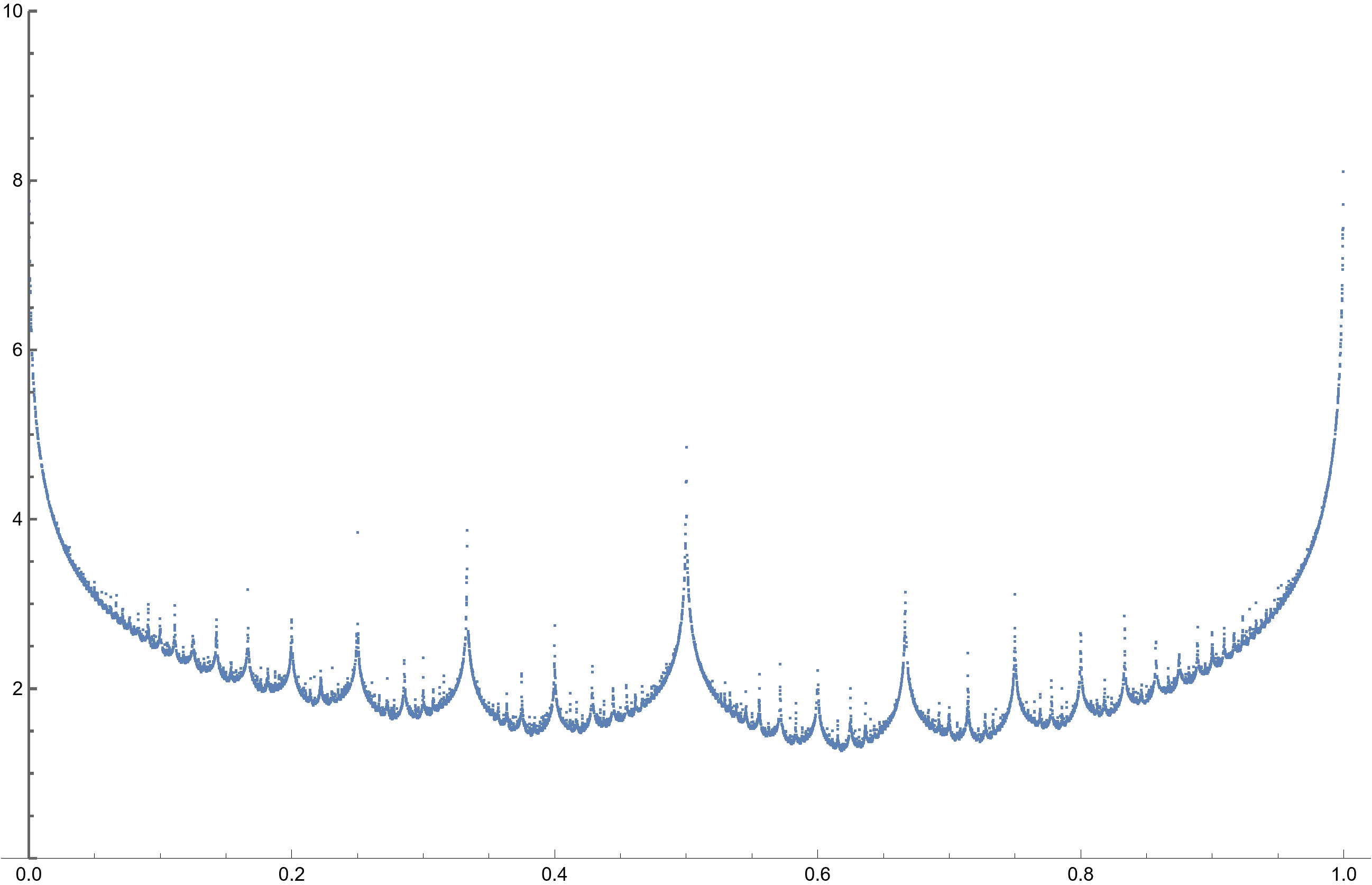}
\includegraphics[scale=.25]
{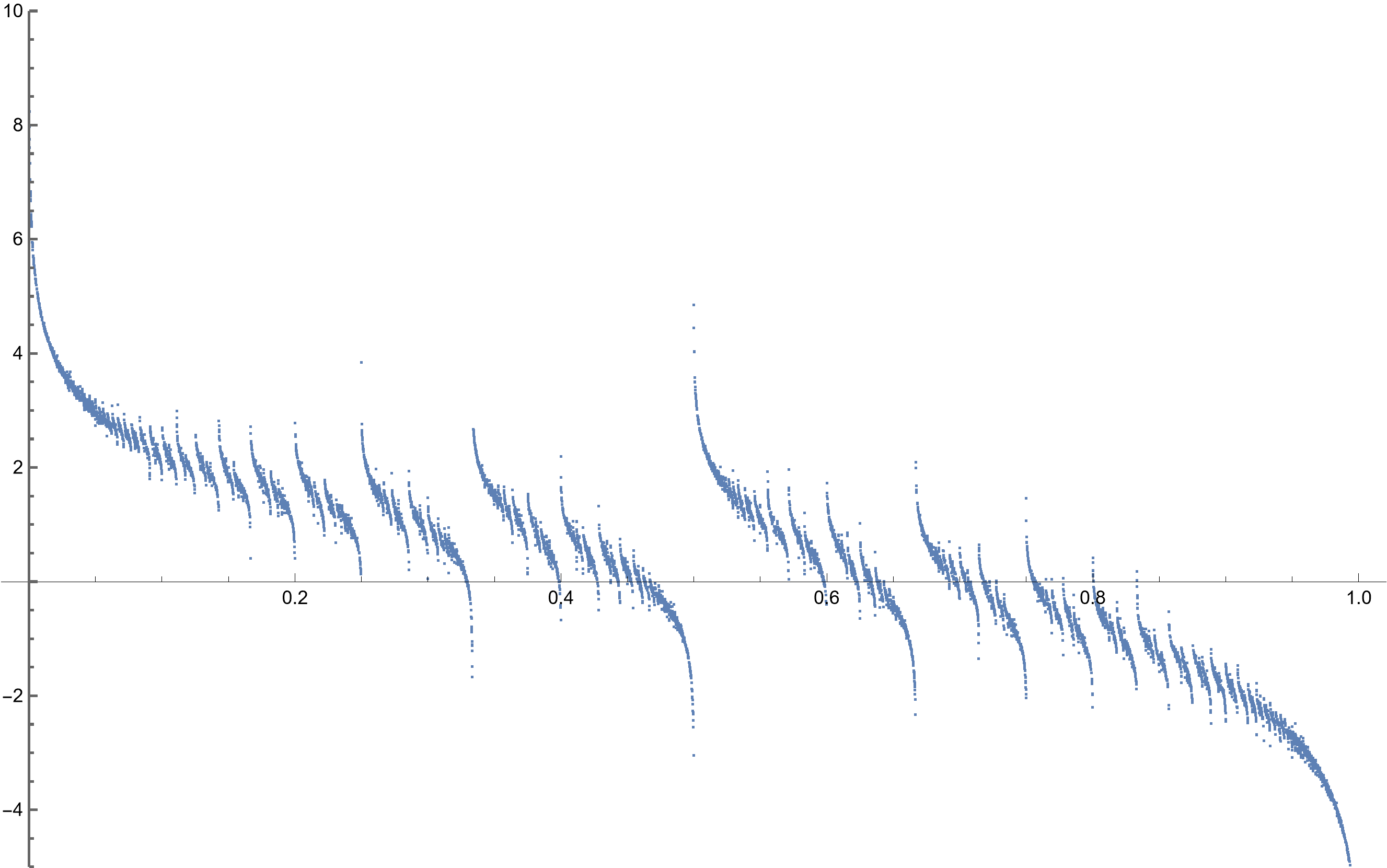}
\caption{Plots of $B$ and $W$ at 10000 randomly chosen points}
\end{figure}

Until now the relation between the Wilton and Brjuno functions has remained mysterious. We show that both functions can be seen as two sides of one coin, and that this coin is the semi-Brjuno function $B_0$ which is the $1$-periodic even function defined by
\begin{align}\label{eq:seriesB0}
    B_{0}(x) =  \sum_{j=0}^\infty (\{x\} A_0(\{x\})\cdots A_0^{j-1}(\{x\})) \log \frac{1}{A_0^j(\{x\})}, ~x\in(0,1),
\end{align}
where $A_0(x)= \lfloor \frac{1}{x}+1\rfloor - \frac{1}{x}$ is the by-excess map. 

\begin{figure}[h]
\centering
\includegraphics[scale=.4]
{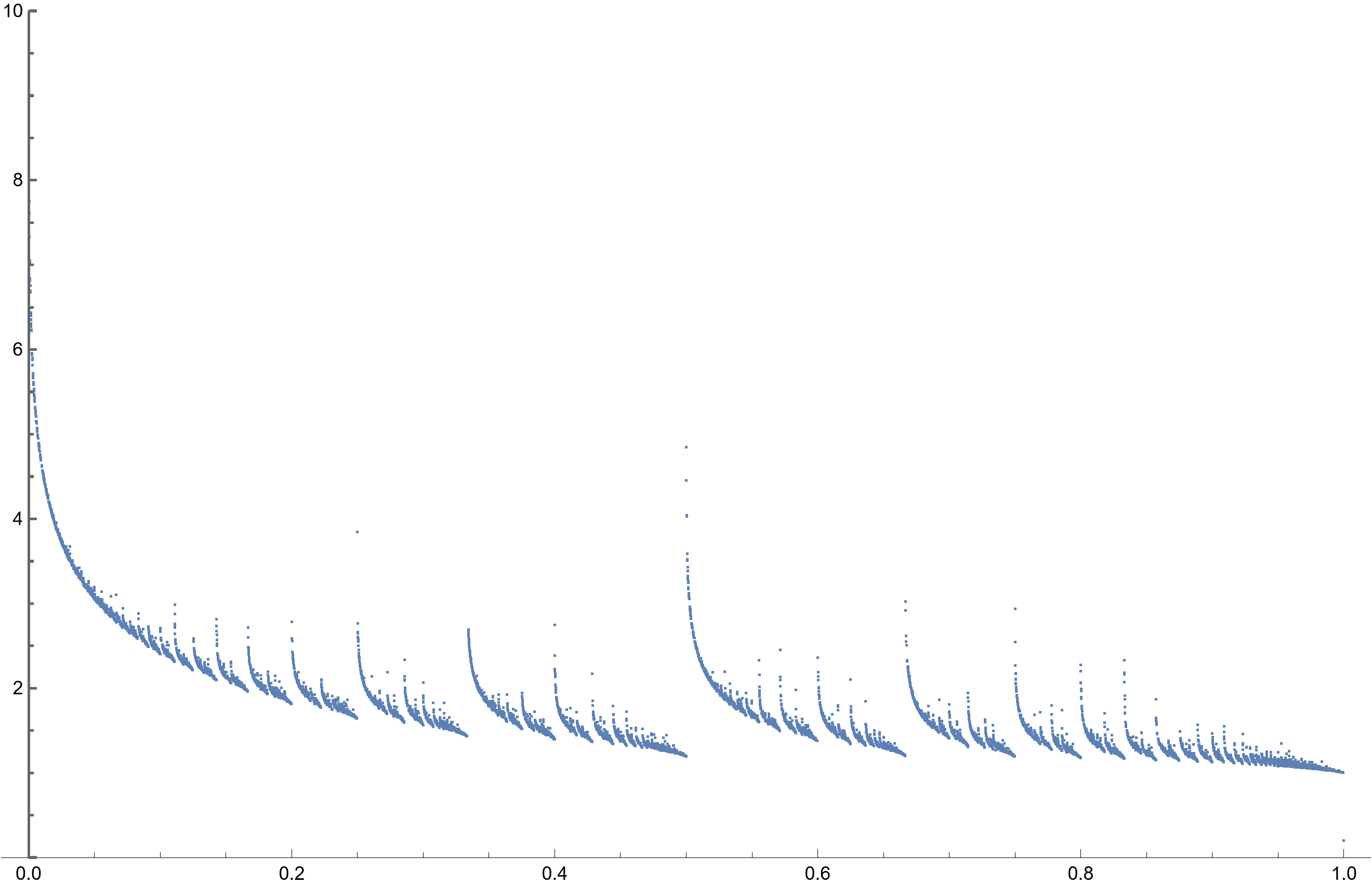}
\caption{Plot of $B_0$ at 10000 randomly chosen points}
\end{figure}

\begin{thm}\label{thm:1}
Let $B_0^\pm$ denote the even and odd parts of the semi-Brjuno function, i.e., $B_0^\pm(x) \coloneqq \frac12 (B_0(x) \pm B_0(-x)).$ Then there exist constants $C^\pm$ such that 
\begin{align*}
    |B(x)-2B_0^+(x)| \leq C^+ \quad \text{and} \quad |W(x)-2B_0^-(x)| \leq C^-
\end{align*} 
uniformly for all irrational numbers $x$.
\end{thm}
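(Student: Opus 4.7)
The plan is to recast the theorem as uniform boundedness of
\[
\delta(x) \coloneqq B_0(x) + B_0(1-x) - B(x), \qquad \epsilon(x) \coloneqq B_0(x) - B_0(1-x) - W(x),
\]
which, by $1$-periodicity of $B_0$, is equivalent to the stated estimates. The key input is the identity $A_0(x) = 1 - A(x)$ for irrational $x \in (0,1)$, an immediate consequence of $\lfloor 1/x + 1\rfloor = \lfloor 1/x\rfloor + 1$. This yields the functional equation $B_0(x) = -\log x + xB_0(1 - A(x))$, to be used alongside the standard $B(x) = -\log x + xB(A(x))$ and $W(x) = -\log x - xW(A(x))$.

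With $y = A(x)$ and $a_1 = \lfloor 1/x\rfloor$, I would first compute $A(1-x) = 1/(a_1 - 1 + y)$ for $a_1 \geq 2$ and $A(1-x) = A(y)$ for $a_1 = 1$. For $a_1 \geq 2$, iterating $A_0$ starting at $1-x$ produces the telescoping chain $A_0^k(1-x) = (a_1 - 1 - k + y)/(a_1 - k + y)$ for $0 \leq k \leq a_1 - 1$, terminating at $1 - A(y)$. Expanding $B_0(1-x)$ along this chain and rearranging the resulting sum of logarithms by summation-by-parts gives, uniformly in $a_1 \geq 1$ with the empty-sum convention at $a_1 = 1$, the closed form
\[
B_0(1-x) = -\log x - x\sum_{k=1}^{a_1-1}\log(k+y) - x(1+y)\log y + xy\,B_0(1 - A(y)).
\]

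The second step is to substitute this identity into the definitions of $\delta$ and $\epsilon$, using also the three functional equations above and the tautologies $B_0(1-y) = B(y) + \delta(y) - B_0(y) = B_0(y) - W(y) - \epsilon(y)$. After applying $B_0(y) = -\log y + yB_0(1-A(y))$ once more, all tail terms involving $B_0(1-A(y))$, $B(A(y))$ and $W(A(y))$ cancel algebraically, leaving the clean simultaneous recursions
\[
\delta(x) = H(x) + x\,\delta(A(x)), \qquad \epsilon(x) = -H(x) - x\,\epsilon(A(x)),
\]
where $H(x) \coloneqq -\log x - x\sum_{k=1}^{a_1-1}\log(k+y) - xy\log y$. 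This step, with its orchestrated cancellation of three different tail terms, is the conceptual heart of the argument.

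The main analytic obstacle is showing that $|H|$ is uniformly bounded. Rewriting $-\log x = \log(a_1 + y)$ and comparing the sum to $\int_0^{a_1-1}\log(t+y)\,dt$ (equivalently, applying Stirling to $\log[\Gamma(a_1+y)/\Gamma(1+y)]$) should give $H(x) = 1 + O(1/a_1)$ as $a_1 \to \infty$, with elementary estimates in the bounded regime; this yields $|H(x)| \leq M$ for a universal constant $M$. Iterating the recursions then produces
\[
\delta(x) = \sum_{n\geq 0}\beta_{n-1}(x)H(A^n(x)), \qquad \epsilon(x) = -\sum_{n\geq 0}(-1)^n\beta_{n-1}(x)H(A^n(x)),
\]
which are both dominated by $M\sum_{n\geq 0}\beta_{n-1}(x) \leq M\sum_{n\geq 1}F_n^{-1}$ using $\beta_{n-1}(x) \leq 1/q_n \leq 1/F_n$; this supplies the constants $C^\pm$.
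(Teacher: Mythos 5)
Your route is genuinely different from the paper's. The paper proves Theorem \ref{thm:1} by quoting three ``arithmetic'' estimates --- Yoccoz's bound $|B(x)-\sum_j \log q_{j+1}/q_j|\le C$, its alternating analogue for $W$, and Theorems 13 and 15 of \cite{LuzziMarmiNakadaNatsui2010} giving $|B_0(x)-\sum_j \log a_{2j+1}/q_{2j}|\le C$ --- and then compares everything to $\sum_j \log q_{2j+1}/q_{2j}$, using Proposition \ref{prop:odd/even} to pass between $B,W$ and their even/odd parts. You instead argue purely through functional equations: the identity $A_0=1-A$, the telescoping by-excess expansion of $B_0(1-x)$ (your closed form is correct; the products telescope to $x(a_1-j+y)$ and the summation by parts checks out), and the resulting exact recursions $\delta=H+x\,\delta\circ A$, $\epsilon=-H-x\,\epsilon\circ A$. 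Your kernel $H$ is exactly $-\Phi$ in the paper's notation (compare \eqref{eq:closedformPhi} and Lemma \ref{lm:continuity}, where $\Phi\to-1$ at $0^+$, matching your $H\to1$; note the error term is $O(\log a_1/a_1)$ rather than $O(1/a_1)$, which is immaterial). So your argument is self-contained where the paper leans on \cite{Yoccoz1995,LuzziMarmiNakadaNatsui2010}, and it produces an explicit, absolutely convergent series for the defects $B-2B_0^+$ and $W-2B_0^-$, in the same spirit as the paper's Section 4 treatment of $\Delta^-$. Your reduction also uses $B_0(-x)=B_0(1-x)$, i.e.\ the $1$-periodic (not even) extension of $B_0$, which is consistent with the paper's own usage in Corollary \ref{coro:funct-eq}.

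The one step that needs repair is ``iterating the recursions then produces'' the series identities. As written, you iterate an identity between quantities that can be $\infty-\infty$ (the theorem is asserted for all irrationals, and $B$, $B_0$, $W$ diverge on uncountably many of them), and even at Brjuno/Wilton points discarding the remainder $\beta_{N-1}(x)\,\delta(A^N x)$, resp.\ $(-1)^N\beta_{N-1}(x)\,\epsilon(A^N x)$, is not automatic: $\delta$ and $\epsilon$ are precisely the functions you are trying to bound, so you have no a priori guarantee that they grow slower than $q_N$ along the orbit; appealing to uniqueness of bounded solutions of $u=x\,u\circ A$ would be circular. The fix is to keep the bookkeeping at the level of finite truncations, which your computation already supplies: expanding $B_0(x)$ and $B_0(1-x)$ for exactly $N$ Gauss steps yields, with no infinite quantities ever subtracted, $[2B_0^+]^{(N)}(x)-B^{(N)}(x)=\sum_{n<N}\beta_{n-1}(x)H(A^nx)$ and $W^{(N)}(x)-[2B_0^-]^{(N)}(x)=\sum_{n<N}(-1)^n\beta_{n-1}(x)H(A^nx)$, where $[\cdot]^{(N)}$ denotes the matched truncations of the two $B_0$-series and the right-hand sides are bounded by your $M\sum_n 1/F_n$ uniformly in $N$. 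In the even case all series involved have nonnegative terms (indeed $H>0$), so letting $N\to\infty$ gives $|B-2B_0^+|\le C^+$ in $[0,\infty]$, with simultaneous divergence of the two sides. In the odd case the series converge only conditionally, so the statement must be read through these matched partial sums: the displayed identity shows that $W(x)$ converges if and only if the truncations of $2B_0^-$ do, with difference of limits bounded by $C^-$. This is the same interpretation the paper tacitly uses at points of divergence; once you make it explicit, your proof is complete.
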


The theorem can be deduced from the work of Wilton, by a modification of his application of Voronoi's summation formula, but the proof we present here will be elementary in the sense that it relies on no complex analysis, and in fact only on elementary properties of $\alpha$-continued fractions as established, e.g., in \cite{LuzziMarmiNakadaNatsui2010}. 

Since the odd part $B^-$ and the even part $W^+$ of the Brjuno respectively Wilton functions are uniformly bounded functions --- see Proposition \ref{prop:odd/even} for closed form expressions --- the functions of interest following Theorem \ref{thm:1} are the differences 
$$
\Delta^+(x) := B^+(x)-2B_0^+(x)\quad \text{ and }\quad  \Delta^-(x) := W^-(x)-2B_0^-(x).
$$
Numerical computations show that $\Delta^\pm$ behave widely differently.
\begin{figure}[h]
\centering
\includegraphics[scale=.25]
{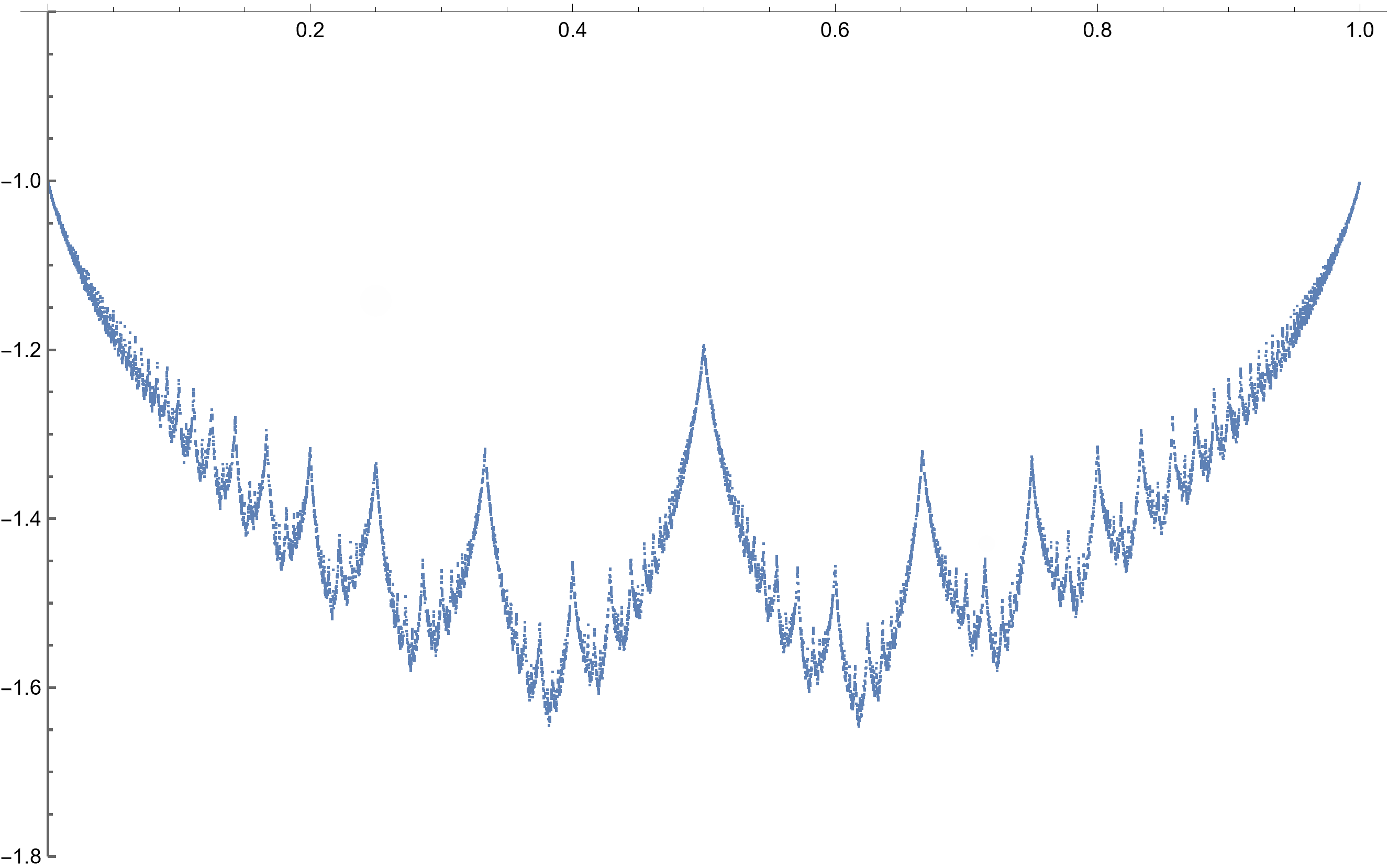}
\includegraphics[scale=.25]
{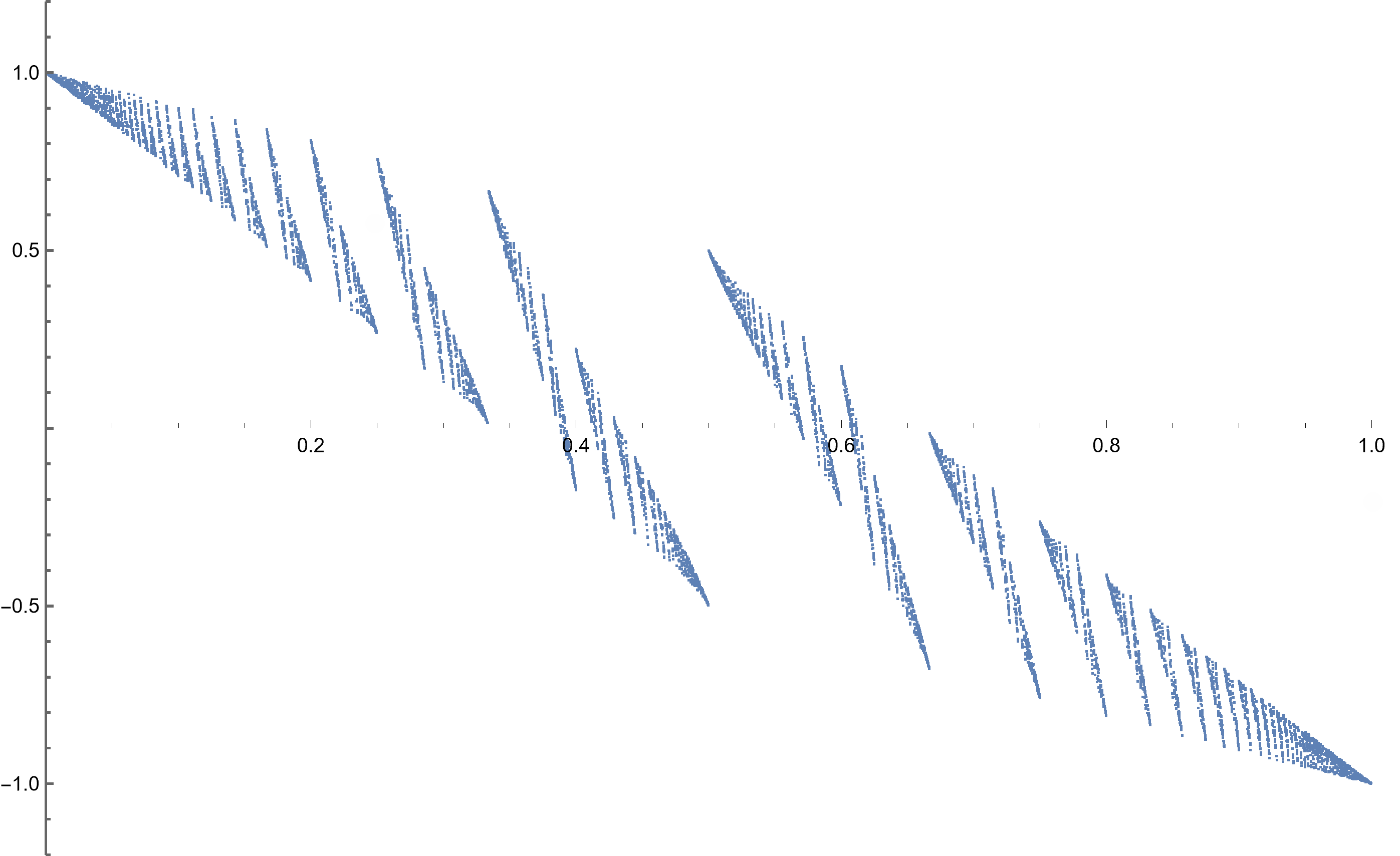}
\caption{Plots of $\Delta^+$ and $\Delta^-$ at 10000 randomly chosen points}
\end{figure}

In \cite[Theorem 1]{LM21}, the second and third authors established that $\Delta^+(x)$ extends to a $\tfrac12$-H\"older continuous function on the real line. This is in stark contrast to what one observes in the graphical plots of $\Delta^-(x)$, which seems to indicate that $\Delta^-(x)$ is discontinuous at each rational with a jump of order $2/q$ for $x=p/q$. In that respect, it is reminiscent of the `popcorn function'
$$
f(x) = \begin{dcases} 1/q & \text{ if } x=p/q\text{ with } (p,q)=1;\\
0 & \text{ if } x \text{ is irrational;}
\end{dcases}
$$
that staple of calculus illustrating the existence of a Riemann integrable function with infinitely many discontinuities. We confirm these numerical observations with the following theorem.

\begin{thm}\label{thm:2}
    The function $\Delta^-(x)$ is continuous on $(0,\tfrac12)\setminus \Q$. At each rational $x=p/q$, it has an increasing jump of $2/q$. In particular $\Delta^-(x)$ is Riemann integrable and nowhere differentiable.
\end{thm}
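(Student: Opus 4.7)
The plan is to exploit the symmetric decomposition
\[
\Delta^-(x) = \tfrac12\bigl[\phi(x)-\phi(1-x)\bigr], \qquad \phi(x)\coloneqq W(x)-2B_0(x),
\]
which is valid on $(0,1)$ because $W$ and $B_0$ are $1$-periodic, so $W(-x)=W(1-x)$ and $B_0(-x)=B_0(1-x)$. For $p/q\in(0,\tfrac12)$, one has $1-p/q=(q-p)/q\in(\tfrac12,1)$ (a rational with the same denominator $q$), and $x\to p/q^{\pm}$ corresponds to $1-x\to(q-p)/q^{\mp}$. Consequently the jump of $\Delta^-$ at $p/q$ equals $\tfrac12$ the sum of the one-sided jumps of $\phi$ at $p/q$ and at $(q-p)/q$, reducing the theorem to computing the jumps of $\phi$ at an arbitrary rational $r=p/q\in(0,1)$.

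To obtain the jump of $\phi$ at such an $r$, I would iterate the functional equations
\[
W(x)=-\log x - xW(A(x)),\qquad B_0(x)=-\log x + xB_0(A_0(x)),
\]
using the identity $A_0(x)=1-A(x)$ (valid away from the discrete set $1/x\in\Z$) to align the by-excess iterates with the Gauss iterates. The two sides of $r$ are parametrized by the two continued-fraction representations $[0;a_1,\dots,a_n]=[0;a_1,\dots,a_n-1,1]$ of $r$: from one side $x=[0;a_1,\dots,a_n,N,\ldots]$ with $N\to\infty$, and from the other $x=[0;a_1,\dots,a_n-1,1,N,\ldots]$. The partial sums for $W(x)$ and $2B_0(x)$ both diverge like $\log N$ as $N\to\infty$, but with coefficients that cancel in $\phi$ (the $(-1)^j$ alternation of $W$ matches the sign pattern of $2B_0$ under the $A_0=1-A$ correspondence), leaving a finite residue whose value depends on the side. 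This is closely analogous to the bounded-defect bookkeeping that proves \thmref{thm:1}, and should contribute a total jump of $4/q$ distributed between $r$ and $1-r$, combining via the factor $\tfrac12$ to give the claimed $+2/q$ for $\Delta^-$.

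The principal technical obstacle is the careful matching between Gauss and by-excess iterates and the verification that the finite residue is exactly $2/q$ with the correct (positive, increasing) sign. I expect a telescoping identity between the two continued-fraction representations, in the spirit of \cite{LuzziMarmiNakadaNatsui2010}, will make this accounting transparent. Granted the jump formula, continuity of $\Delta^-$ on $(0,\tfrac12)\setminus\Q$ follows from the continuity of the Gauss iterates at irrational $x_0$, the uniform tail bound provided by \thmref{thm:1}, and the continuous dependence of the finitely many initial terms on $x$.

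The remaining assertions are then standard. Boundedness of $\Delta^-$ (from \thmref{thm:1}) together with discontinuities confined to the countable set $\Q\cap(0,\tfrac12)$ gives Riemann integrability by Lebesgue's criterion. For nowhere differentiability, the jumps preclude differentiability at rationals; at any irrational $x_0$, Dirichlet's theorem supplies rationals $p_n/q_n\to x_0$ with $|x_0-p_n/q_n|<1/q_n^2$, and the jump of $2/q_n$ at $p_n/q_n$ forces one-sided difference quotients across $p_n/q_n$ to differ by at least $(2/q_n)/|p_n/q_n-x_0|\ge 2q_n\to\infty$, so no finite derivative can exist at $x_0$ either.
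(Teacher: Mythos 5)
There is a genuine gap, and it sits at the heart of the theorem. Your reduction rests on the claim that $\phi(x)=W(x)-2B_0(x)$ has finite one-sided limits (a ``finite residue'') at each rational, so that the jump of $\Delta^-=\tfrac12[\phi(x)-\phi(1-x)]$ can be computed from the jumps of $\phi$ at $p/q$ and $(q-p)/q$ separately. This premise is false: by \thmref{thm:1}, $2B_0=2B_0^++2B_0^-$ differs from $B+W$ by a bounded function, so $\phi$ differs from $-B$ by a bounded function, and $B$ diverges to $+\infty$ at every rational; hence $\phi\to-\infty$ from both sides of every rational and has no finite residues. (Concretely, in the divisor-sum heuristics with $x=[0;a_1,\dots,a_n,N,\dots]$, the $\log N$ terms of $W$ and of $2B_0\approx B+W$ do \emph{not} cancel inside $\phi$; they cancel only in the antisymmetrized combination $\phi(x)-\phi(1-x)$, which forces a joint analysis at $p/q$ and $(q-p)/q$ that your scheme avoids.) A second, related problem is that all the inputs you invoke from \thmref{thm:1} are bounded-defect statements; a defect that is merely bounded carries no continuity information and cannot produce either the exact jump value $2/q$ with its sign or the continuity of $\Delta^-$ at irrationals (your ``uniform tail bound provided by Theorem 1'' is only boundedness, not control of tails of any series for $\Delta^-$). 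The ``telescoping identity'' you hope for is precisely the missing content, not a routine verification.

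The paper supplies exactly these missing exact identities: it derives the functional equation $\Delta^-(x)=f(x)-x\Delta^-(Ax)$ on $(0,\tfrac12)\setminus\Q$ (\corref{coro:funct-eq}), proves via the H\"older continuity of $\Phi$ from \cite{LM21} that the $1$-periodic odd extension $\tilde f$ is continuous at irrationals with $\tilde f(0^+)=1$ and $\tilde f(\tfrac12^-)=0$ (\lemref{lm:continuity}), and then expands $\Delta^-$ into the alternating series (\ref{eq:series}) over the nearest-integer continued fraction, uniformly convergent thanks to $|\beta_N|\le\tfrac12(\sqrt2-1)^N$. Continuity at irrationals follows from uniform convergence of continuous partial sums, and the jump at $p/q$ comes out of the exact bookkeeping: the relevant term has modulus tending to $\tilde f(0^+)\cdot|Q_{i-1}x-P_{i-1}|\to 1/q$ with the sign $(-1)^i\varepsilon_1\cdots\varepsilon_i$ flipping across $p/q$, giving the increasing jump $2/q$ (Theorem \ref{jumps}). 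Your closing remarks (Riemann integrability via Lebesgue's criterion, and non-differentiability at irrationals via nearby rationals $p_n/q_n$ with $|x_0-p_n/q_n|<1/q_n^2$ and jumps $2/q_n$) are fine once the jump statement is established, but as it stands the proposal does not establish it.
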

The proof proceeds by identifying the characterizing functional equation for $\Delta^-$, i.e., determining the function $f(x)=\Delta^-(x)+x\Delta^-(1/x)$ for $x\in(0,\tfrac12)\setminus\Q$. From there, we deduce a series expansion for $\Delta^-$, similarly to (\ref{eq:seriesB})--(\ref{eq:seriesB0}). To show the series thus obtained converges uniformly to a continuous function on $(0,\tfrac12)\setminus\Q$ with the predicted jumps at each rational, we study the regularity of $f$ on the domain of each branch of the iterates of the nearest-integer map $A_{1/2}$.

The third author with Moussa and Yoccoz \cite{MarmiMoussaYoccoz2001} showed that the Brjuno function can be extended to a holomorphic function on $\h/\Z$, the complex Brjuno function, and the same was later established for the Wilton function by the second and third authors with Petrykiewicz and Schindler \cite{LeeMarmiPetrykiewiczSchindler2023}. Following Theorem \ref{thm:1}, we derive at the end of this note a closed form expression for the complex counterpart of $(B+W)/2$.

\section{Even/odd parts and functional equations}\label{sec:even/odd}

Similarly to the Brjuno function, the Wilton function is characterized by functional equations, given by
\begin{align*}
    W(x+1)=W(x)\quad \text{ and }\quad W(x) = -\log x -xW(1/x) \text{ when } x\in(0,1).
\end{align*}
Since $B^-$, $W^+$ are 1-periodic odd respectively even functions, they are completely determined by their values on irrational numbers on the interval $[0,\tfrac12]$. We record the following explicit formulas. 

\begin{prop}\label{prop:odd/even}
Let $x\in(0,\tfrac12)$ be irrational. Then 
\begin{align*}
    B^-(x) = \frac{x}{2} \log \frac{1-x}{x}\quad \text{and} \quad 
    W^+(x) = \frac{x}{2}\log \frac{1-x}{x} - \log(1-x).
\end{align*}
In particular, $B^-$ and $W^+$ are uniformly bounded functions on $\R\setminus\Q$.
\end{prop}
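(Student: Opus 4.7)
The plan is to combine $1$-periodicity with the functional equations for $B$ and $W$ applied at both $x$ and $1-x$. By $1$-periodicity, $B(-x) = B(1-x)$ and $W(-x) = W(1-x)$ for $x \in (0, 1)$, so for $x \in (0, \tfrac12)$ irrational one may write
\begin{align*}
B^-(x) = \tfrac{1}{2}\bigl(B(x) - B(1-x)\bigr), \qquad W^+(x) = \tfrac{1}{2}\bigl(W(x) + W(1-x)\bigr),
\end{align*}
and it suffices to compute $B(x) - B(1-x)$ and $W(x) + W(1-x)$ in closed form.

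First I would apply $B(t) = -\log t + tB(1/t)$ at $t = x$ and $t = 1-x$. Since $1/(1-x) \in (1, 2)$, $1$-periodicity gives $B(1/(1-x)) = B(x/(1-x))$. Setting $y \coloneqq x/(1-x) \in (0, 1)$, so that $x = y/(1+y)$, $1-x = 1/(1+y)$, and $1/x = 1 + 1/y$, periodicity also gives $B(1/x) = B(1/y)$. Collecting terms yields
\begin{align*}
B(x) - B(1-x) = \log\tfrac{1-x}{x} + (1-x)\bigl(yB(1/y) - B(y)\bigr).
\end{align*}
The inner bracket equals $\log y = \log\tfrac{x}{1-x}$ by one more application of the functional equation, and the two logarithmic terms combine to give $B(x) - B(1-x) = x\log\tfrac{1-x}{x}$, which is the claim for $B^-$.

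For $W^+$ the same recipe applies, the only change being that the sign flip in $W$'s functional equation produces $yW(1/y) + W(y) = -\log y$ at the last step. This yields $W(x) + W(1-x) = -x\log x - (2-x)\log(1-x)$, which rearranges to the claimed $\tfrac{x}{2}\log\tfrac{1-x}{x} - \log(1-x)$. Boundedness of $B^-$ and $W^+$ on $\R \setminus \Q$ then follows by inspection of the formulas on $(0, \tfrac12]$ (with $x\log x$ continuous and vanishing at the endpoint $x=0$), extended by $1$-periodicity and odd/even symmetry. I do not anticipate any real obstacle beyond careful bookkeeping, as the whole derivation is purely formal once the substitution $y = x/(1-x)$ is identified.
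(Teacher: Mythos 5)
Your argument is correct and is essentially the paper's own proof: both reduce $B^-$ and $W^+$ to $B(x)-B(1-x)$ and $W(x)+W(1-x)$ and evaluate these by combining $1$-periodicity with repeated use of the functional equations, your substitution $y=x/(1-x)$ being only a notational repackaging of the paper's application of the functional equation at $\tfrac{1}{1-x}$. You even carry out the Wilton computation explicitly, which the paper leaves as ``the proof is the same,'' and your final formulas and boundedness claim check out.
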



\begin{proof}
We use the functional equations (\ref{eq:fteqB}) for the Brjuno function to rewrite
\begin{align*}
   B(x) - B(-x) = B(x) - B(1-x) &= B(x) +\log(1-x) - (1-x) B\left(\frac{1}{1-x}\right).
\end{align*}
Since $x\in (0, \frac{1}{2})$ we have $\frac{1}{1-x}-1=\frac{x}{1-x}\in (0,1)$ and so we may apply the functional equation again
\begin{align*}
   &= B(x) + \log(1-x) - (1-x)\log \frac{1-x}{x} - x B\left(\frac{1}{x}\right) = x \log \frac{1-x}{x}.
\end{align*}
The proof is the same for the even part of the Wilton function, based on its functional equations.
\end{proof}

\begin{prop}[Functional equation for $W^-$]\label{prop:fteq for W-}
Let $x\in(0,\frac12)$ be irrational and define
\begin{align*}
g(x)&:= -\log x - [ W^+(x)+ xW^+(Ax)].
\end{align*}
Then 
$
W^-(x)= g(x) -xW^-\left(Ax\right) 
$ 
for every  $x\in(0,\frac12)\setminus\Q$.
\end{prop}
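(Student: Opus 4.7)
My plan is to derive the functional equation for $W^-$ directly from the functional equation for $W$ stated at the beginning of \secref{sec:even/odd}, by splitting $W = W^+ + W^-$ and using $1$-periodicity. The argument is a short algebraic manipulation, and no serious obstacle is expected.

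First, I would start from the identity
\begin{align*}
W(x) = -\log x - x W(1/x), \qquad x \in (0,1),
\end{align*}
which applies in particular for $x \in (0,\tfrac12)$. Next, since $1/x > 2$ when $x \in (0,\tfrac12)$, I would invoke $1$-periodicity to rewrite
\begin{align*}
W(1/x) = W(\{1/x\}) = W(Ax),
\end{align*}
so that the functional equation becomes $W(x) = -\log x - x W(Ax)$.

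Then I would decompose both $W(x)$ and $W(Ax)$ into their even and odd parts, $W = W^+ + W^-$, yielding
\begin{align*}
W^+(x) + W^-(x) = -\log x - x W^+(Ax) - x W^-(Ax).
\end{align*}
Isolating $W^-(x)$ on the left and grouping the terms not involving $W^-$ on the right gives
\begin{align*}
W^-(x) = \bigl[-\log x - W^+(x) - x W^+(Ax)\bigr] - x W^-(Ax) = g(x) - x W^-(Ax),
\end{align*}
which is the desired identity. The formula is valid for every irrational $x \in (0,\tfrac12)$, since $x$ irrational ensures $Ax$ is also irrational and thus $W^\pm(Ax)$ are well defined.

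The step I would double-check most carefully is the reduction $W(1/x)=W(Ax)$ and the fact that $W^+$ is even while $W^-$ is odd in the sense needed, so that $W(Ax) = W^+(Ax)+W^-(Ax)$ literally (without sign changes from shifting arguments). Since $W^\pm$ are defined as the even/odd parts of the $1$-periodic function $W$, this holds unambiguously, and no further work is required.
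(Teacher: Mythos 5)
Your proposal is correct and follows essentially the same route as the paper: both derive the identity by applying the functional equation $W(x)=-\log x - xW(1/x)$, using $1$-periodicity to replace $1/x$ by $Ax$, and splitting $W=W^++W^-$. The only cosmetic difference is that the paper keeps the argument $1/x$ and reduces modulo $1$ implicitly via the periodicity of $W^\pm$, whereas you reduce first and then decompose; the algebra is identical.
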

\begin{proof} 
From the functional equation of the Wilton function and its decomposition in even and odd parts, we get that
$$
W^-(x) = W(x)-W^+(x) = -\log(x) -x W^+(1/x) - W^+(x) -xW^-(1/x).
$$
\end{proof}

\begin{rmk}
    The function $g(x)$ can be made explicit by inserting the expression for $W^+(x)$ given by Proposition \ref{prop:odd/even}.
\end{rmk}

\begin{coro}[Functional equation for $\Delta^-$]\label{coro:funct-eq}
Recall the definition of $g(x)$ in Proposition \ref{prop:fteq for W-} and set
$$
\Phi(x):= xB_0(Ax)-B_0(1-x), \quad f(x):= g(x)+ \log x -\Phi(x)
$$
for each $x\in(0,\tfrac12)\setminus\Q$. The function $\Delta^-(x)=W^-(x)-2B_0^-(x)$ is odd and 1-periodic on $\R\setminus\Q$ and satisfies the functional equation $$
\Delta^-(x)=f(x)-x\Delta^-(A x)
$$
for $x\in(0,\tfrac12)\setminus\Q$.
\end{coro}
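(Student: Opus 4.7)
The plan is to combine Proposition \ref{prop:fteq for W-} with the standard functional equation satisfied by the semi-Brjuno function $B_0$ itself. Oddness and $1$-periodicity of $\Delta^-$ on $\R\setminus\Q$ are immediate: as a difference of odd parts of $1$-periodic functions, $\Delta^- = W^- - 2B_0^-$ inherits both symmetries.

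For the functional equation, I would form the sum $\Delta^-(x) + x\Delta^-(Ax)$. By definition of $\Delta^-$ and Proposition \ref{prop:fteq for W-}, this equals
\begin{align*}
\bigl[W^-(x) + xW^-(Ax)\bigr] - 2\bigl[B_0^-(x) + xB_0^-(Ax)\bigr] = g(x) - 2\bigl[B_0^-(x) + xB_0^-(Ax)\bigr].
\end{align*}
Since $f(x) = g(x) + \log x - \Phi(x)$, establishing the identity $\Delta^-(x) + x\Delta^-(Ax) = f(x)$ reduces to showing
\[
2\bigl[B_0^-(x) + xB_0^-(Ax)\bigr] \;=\; -\log x + \Phi(x) \;=\; -\log x + xB_0(Ax) - B_0(1-x).
\]

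To verify this last identity, I would use $2B_0^-(y) = B_0(y) - B_0(-y) = B_0(y) - B_0(1-y)$, where the second equality uses only $1$-periodicity applied to $y \in (0,1)$. Expanding the left-hand side in this manner yields $B_0(x) - B_0(1-x) + x\bigl[B_0(Ax) - B_0(1-Ax)\bigr]$. Cancelling the $-B_0(1-x)$ and $xB_0(Ax)$ terms that appear on both sides collapses the required identity to
\[
B_0(x) = -\log x + xB_0(1 - Ax).
\]
The elementary identity $A_0(x) = \lfloor 1/x + 1\rfloor - 1/x = 1 - \{1/x\} = 1 - A(x)$, valid for irrational $x \in (0,1)$, rewrites the right-hand side as $-\log x + xB_0(A_0 x)$, which is the functional equation for $B_0$ obtained by extracting the $j = 0$ summand of the defining series \eqref{eq:seriesB0}.

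The proof is essentially bookkeeping; the only substantive point is the observation $A_0 = 1 - A$, which is exactly what converts the unwanted $-B_0(1-Ax)$ term into an application of the standard functional equation for $B_0$. No serious obstacle is anticipated.
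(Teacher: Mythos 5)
Your proposal is correct and follows essentially the same route as the paper: expand $\Delta^-(x)+x\Delta^-(Ax)$ using Proposition \ref{prop:fteq for W-}, write $2B_0^-(y)=B_0(y)-B_0(1-y)$ via $1$-periodicity, and reduce everything to the identity $A_0=1-A$ together with the functional equation $B_0(x)=-\log x+xB_0(A_0x)$. The only difference is organizational (you isolate the $B_0$-identity before verifying it), which does not change the argument.
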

\begin{proof}
By definition we have $\Delta^-(x) = W^-(x)-B_0(x)+B_0(1-x)$, hence
\begin{align*}
    \Delta^-(x)+x\Delta^-(Ax) &= W^-(x)+xW^-(Ax)-B_0(x)+xB_0(1-A x)\\
    &\quad - xB_0(Ax) + B_0(1-x).
\end{align*}
For all $x\in(0,1)$,
$A_0(x)=1-A(x)$ and so by Proposition \ref{prop:fteq for W-} and the functional equation for $B_0$, this is $=g(x)+\log x -\Phi(x)$.
\end{proof}
\begin{rmk}
For $x\in(0,\tfrac12)\setminus\Q$ we have the explicit form
\begin{align}\label{eq:closedformPhi}
    \Phi(x) = x\log x + x A(x) \log(xA(x)) + x \sum_{j=1}^{n-1} \log(1-jx),
\end{align}
with $n=\lfloor \tfrac{1}{x}\rfloor$. This is proven in part {\rm (1)} of the proof of Theorem 1 in \cite{LM21}.
\end{rmk}

\section{Proof of Theorem \ref{thm:1}}

Brjuno, Wilton and semi-Brjuno numbers can be tested using the following valid inequalities
\begin{align*}
\left| B(x) - \sum_{j\geq0} \frac{\log q_{j+1}}{q_j} \right|,\, &\left| W(x) -\sum_{j\geq0} (-1)^j\frac{\log q_{j+1}}{q_j} \right|,\   \left| B_0(x) -\sum_{j\geq0} \frac{\log a_{2j+1}}{q_{2j}} \right|  \leq C
\end{align*}
for some uniform constant $C$, where $a_j$ and $q_j$ are the partial quotients, respectively denominators of the convergents for the simple continued fraction expansion of $x$. For the Brjuno function, this is due to Yoccoz \cite[p.~14]{Yoccoz1995}, with the same argument giving the statement for the Wilton function. For the semi-Brjuno function this is established by combining Theorems 13 and 15 in \cite{LuzziMarmiNakadaNatsui2010}. Hence 
\begin{align*}
    \frac{B(x)+W(x)}{2}\, &\approx\, \sum_{j\geq0} \frac{\log q_{2j+1}}{q_{2j}}\, =\, \sum_{j\geq0} \frac{\log (a_{2j+1}(q_{2j}+\tfrac{q_{2j-1}}{a_{2j+1}}))}{q_{2j}}\\
    &\approx\,  \sum_{j\geq0} \frac{\log a_{2j+1}}{q_{2j}}\, \approx\, B_0(x),
\end{align*}
where we use $\approx$ to denote that left and right hand-sides differ by a uniform constant. 
Here we use the fact that $\sum_{j\ge0} \frac{\log q_{j}}{q_j}$ is uniformly bounded, see Remark 1.7 in \cite{MMY97}.
Together with \cref{prop:odd/even}, this yields
\begin{align*}
    2B_0^+(x) \approx B^+(x)+W^+(x) \approx B^+(x) \approx B^+(x) + B^-(x) = B(x)
\end{align*}
and $2B_0^-(x)\approx W(x)$. This concludes the proof of Theorem \ref{thm:1}.

\vspace{.1cm}

\section{Proof of Theorem \ref{thm:2}}

Recall the definitions of $f(x)$, $g(x)$ from Proposition \ref{prop:fteq for W-}, Corollary \ref{coro:funct-eq}. We will denote by $\tilde f(x)$ the extension of $f(x)$ to a 1-periodic odd function on $\R\setminus\Q$.

\begin{lm}\label{lm:continuity}
The function $\tilde f(x)$ is continuous for every  $x\in\R\setminus\Q$, with 
$$
\lim_{x\to 0^+} \tilde f(x) = 1,\qquad \lim_{x\to \frac12^-} \tilde f(x) = 0,
$$
where the limit is taken on irrational numbers.
\end{lm}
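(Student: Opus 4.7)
The plan is to work from the simplified expression
\[
f(x) = -W^+(x) - xW^+(Ax) - \Phi(x),
\]
obtained by cancelling the two $\log x$ terms in $f = g + \log x - \Phi$, and to combine it with the closed forms for $W^+$ from \propref{prop:odd/even} and for $\Phi$ from \eqref{eq:closedformPhi}.

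For continuity, the first observation is that $\tilde f$ is determined on $(0,\tfrac12)\setminus\Q$, and the extension by 1-periodicity and oddness creates potential new discontinuities only at integer and half-integer points, all of which are rational. So it suffices to show $f$ is continuous on $(0,\tfrac12)\setminus\Q$. On each open interval $I_n = (\tfrac{1}{n+1},\tfrac{1}{n})$ with $n\geq 2$, the Gauss map reduces to the smooth map $x\mapsto 1/x-n$, and in \eqref{eq:closedformPhi} the integer $n=\lfloor 1/x\rfloor$ is constant, so $\Phi$ is a finite sum of smooth terms. Since $W^+$ is continuous on $[0,1]$ (this follows from the closed form of \propref{prop:odd/even} combined with the evenness and periodicity of $W^+$), both $W^+(x)$ and $W^+(Ax)$ are continuous on $I_n$. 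Hence $f$ is smooth on each $I_n$, and every irrational in $(0,\tfrac12)$ lies in some $I_n$.

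For the two limits I would plug in the closed forms and pass to the limit term by term. As $x\to 0^+$ with $n=\lfloor 1/x\rfloor\to\infty$, one has $W^+(x)\to 0$ by \propref{prop:odd/even} and $xW^+(Ax)\to 0$ since $W^+$ is uniformly bounded on $[0,1]$. For $\Phi$, the identity $1-nx=xAx$ (from $1/x=n+Ax$) shows that the first two terms $x\log x$ and $xAx\log(xAx)$ of \eqref{eq:closedformPhi} both tend to $0$, while the remaining term $x\sum_{j=1}^{n-1}\log(1-jx)$ is a Riemann-type sum which I claim converges to $\int_0^1\log(1-u)\,du=-1$, so $\Phi(x)\to -1$ and $\tilde f(x)\to 1$. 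As $x\to\tfrac12^-$, $n=2$ is fixed and $Ax\to 0^+$, so $W^+(x)\to\log 2$, $W^+(Ax)\to 0$, and \eqref{eq:closedformPhi} reduces to $x\log x+(1-2x)\log(1-2x)+x\log(1-x)\to -\log 2$; the cancellation gives $\tilde f(x)\to 0$.

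The main obstacle is justifying the Riemann-sum limit, because $\log(1-u)$ blows up as $u\to 1^-$ and the summand for $j=n-1$ is unbounded. I would handle this by exploiting the monotonicity of $\log(1-u)$ on $(0,1)$: the left and right Riemann sums sandwich $\int_0^{(n-1)x}\log(1-u)\,du$, and their difference is a single boundary term $x\log(1-(n-1)x)=x\log(x(1+Ax))=O(x\log x)\to 0$. Since $(n-1)x\to 1^-$ as $x\to 0^+$, the improper integral $\int_0^1\log(1-u)\,du$ converges to $-1$, closing the argument.
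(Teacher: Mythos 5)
Your proof is correct, and it follows the same basic skeleton as the paper: both work from the decomposition $f(x)=-\bigl[W^+(x)+xW^+(Ax)\bigr]-\Phi(x)$ and obtain the limits $1$ and $0$ from $\Phi(x)\to-1$, $\Phi(x)\to-\log 2$ together with the closed form for $W^+$ from \propref{prop:odd/even}. The difference is in how the two inputs are justified. The paper gets continuity of $\Phi$ (in fact $\alpha$-H\"older continuity on all of $[0,\tfrac12]$) by citing the proof of Theorem 1 in \cite{LM21}, and simply asserts the limit values of $\Phi$ "based on the closed-form expression"; you instead give a self-contained argument: continuity at irrationals via smoothness of the closed form \eqref{eq:closedformPhi} on each branch interval $(\tfrac{1}{n+1},\tfrac{1}{n})$ (which is all the lemma needs, though it yields less than the H\"older regularity the citation provides), and a genuine proof that $x\sum_{j=1}^{n-1}\log(1-jx)\to\int_0^1\log(1-u)\,du=-1$, with the logarithmic singularity at $u=1$ correctly controlled by monotonicity and the observation that the boundary term is $x\log\bigl(x(1+Ax)\bigr)=O(x\log x)$. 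Your reduction of continuity of $\tilde f$ at irrationals to continuity of $f$ on $(0,\tfrac12)\setminus\Q$ (new discontinuities from the odd/periodic extension can only occur at integers and half-integers, which are rational) is also a point the paper leaves implicit, and the $x\to\tfrac12^-$ computation with $n=2$ matches the paper's values exactly.
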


\begin{proof}
In the proof of Theorem 1 in \cite{LM21}, the function $\Phi(x) = g(x) + \log x -f(x)$ (see Corollary \ref{coro:funct-eq} and Figure~\ref{fig:rest.B})
 is shown to extend to an $\alpha$-H\"older continuous function on $[0,\frac12]$ for $\tfrac12<\alpha<1$. 
 \begin{figure}[ht]
\includegraphics[width=8cm]{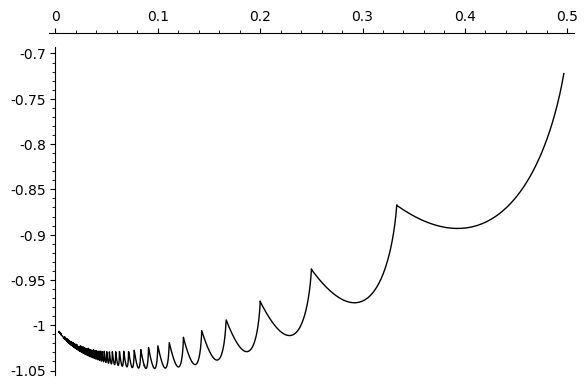}
\caption{The graph of $\Phi$ on $(0,\frac12)$.}
\label{fig:rest.B}
\end{figure}
 Based on the closed-form expression (\ref{eq:closedformPhi}) we have 
 $$
 \lim_{x\to0^+} \Phi(x) = -1\,  \quad \text{ and }\quad \lim_{x\to\frac12^-} \Phi(x) = -\log 2.
 $$
 On the other hand, the definition of $g(x)$ with Proposition \ref{prop:odd/even} together with the fact that $W^+$ is uniformly bounded on irrationals for this need limit over irrationals implies that 
 $$
 \lim_{x\to0^+} (g(x)+\log x) = 0\,  \quad \text{ and }\quad \lim_{x\to\frac12^-} (g(x)+\log x) = -\log 2.
 $$
\end{proof}

The nearest-integer map
\begin{equation}\label{eq:NICF}
A_{1/2}:(0,1/2)\to(0,1/2), \quad A_{1/2}(x) = \left|\frac{1}{x}-\left\lfloor \frac{1}{x}+\frac{1}{2}\right\rfloor\right|.
\end{equation}
yields the continued fraction expansion 
$$x = a_0+\frac{\varepsilon_0}{a_1+\frac{\varepsilon_1}{\ddots}},$$
where 
$$
a_i = \left\lfloor\frac{1}{x_{i-1}}+\frac{1}{2}\right\rfloor,\, \varepsilon_i = \mathrm{sgn}\left(\frac{1}{x_{i-1}}-a_i\right),\,  x_i = A_{1/2}(x_{i-1}) = \varepsilon_i\left(\frac{1}{x_{i-1}}-a_i\right)
$$
for $i\geq1$ with initial seed $x_{0}=|x-\lfloor x +1/2\rfloor|$, $a_0= \lfloor x +1/2\rfloor$ and $\varepsilon_0=\mathrm{sgn}(x-a_0)$.
For $x\in(0,1/2)$, we have 
$$\begin{pmatrix}\varepsilon_nP_{n-1}&P_n\\ \varepsilon_nQ_{n-1}&Q_n\end{pmatrix} = \begin{pmatrix}0&1\\ \varepsilon_1&a_1\end{pmatrix}\cdots \begin{pmatrix}0&1\\ \varepsilon_n&a_n\end{pmatrix}.$$
Thus
$$x_n = (-\varepsilon_n)\frac{Q_nx-P_n}{Q_{n-1}x-P_{n-1}}.$$
Then we have $x_0x_1\cdots x_n = |Q_nx-P_n|$ and $|Q_nP_{n-1}-Q_{n-1}P_n|=1$.




\begin{thm} \label{jumps}
The function $\Delta^-$ is the series
\begin{align}\label{eq:series}
\Delta^-(x) = \sum_{i=0}^\infty (-1)^i \beta_{i-1}(x)\tilde{f}(\varepsilon_ix_i),
\end{align}
where $\beta_i(x) = \varepsilon_0x_0\varepsilon_1x_1\cdots \varepsilon_{i}x_{i}$ and $\beta_{-1}(x)=1$, and it converges uniformly. Moreover, for each $M\ge 1$, its truncation 
$$\Delta^-_M(x) = \sum_{i=0}^M(-1)^i\varepsilon_0x_0\varepsilon_1x_1\cdots \varepsilon_{i-1}x_{i-1}\tilde{f}(\varepsilon_ix_i)$$
is continuous on $\mathbb (0,\frac 12)\setminus\Q_M$, where $\Q_M$ is the set of rationals $p/q$ whose depth ($i$ such that $A_{1/2}^i(p/q)=0$) is at most $M$.
On $p/q\in\Q_M$, $\Delta^-_M$ has increasing jump of $2/q$.
\end{thm}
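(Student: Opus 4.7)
The plan is to iterate the functional equation from \corref{coro:funct-eq}, first translating between the Gauss map $A$ and the nearest-integer map $A_{1/2}$ via the oddness and $1$-periodicity of $\Delta^-$, and then analyzing the resulting series term by term.

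For $x\in(0,\tfrac12)\setminus\Q$ one has $Ax\equiv\varepsilon_1 x_1\pmod 1$, so by oddness and $1$-periodicity of $\Delta^-$ we get $\Delta^-(Ax)=\varepsilon_1\Delta^-(x_1)$; substituting into \corref{coro:funct-eq} yields $\Delta^-(x)=\tilde f(x)-\varepsilon_1 x\,\Delta^-(x_1)$. Iterating this $n+1$ times (each $x_i\in(0,\tfrac12)\setminus\Q$ for $i\ge1$) and using the oddness identity $f(x_i)=\varepsilon_i\tilde f(\varepsilon_i x_i)$ to convert each $f(x_i)$ into $\tilde f(\varepsilon_i x_i)$, I would obtain by induction
$$\Delta^-(x)=\sum_{i=0}^{n}(-1)^i\beta_{i-1}(x)\tilde f(\varepsilon_i x_i)+R_n(x),$$
where $|R_n(x)|\le\|\Delta^-\|_\infty\,|\beta_n(x)|$. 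By \thmref{thm:1} combined with the boundedness of $W^+$ from \propref{prop:odd/even}, $\Delta^-=W^--2B_0^-$ is uniformly bounded; since $x_i<\tfrac12$ for $i\ge1$, we have $|\beta_n(x)|\le x(1/2)^n$, so $R_n\to 0$ uniformly. The boundedness of $\tilde f$ from \lemref{lm:continuity} then gives uniform convergence of the series.

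To prove continuity of $\Delta^-_M$, I would use the cylinder partition of $(0,\tfrac12)$ determined by the first $M$ NICF digits. On the interior of each cylinder, $(\varepsilon_i,a_i)_{i\le M}$ is constant, each $x\mapsto x_i$ is a Möbius transformation, and $\tilde f$ is continuous by \lemref{lm:continuity}, so every summand $T_i(x):=(-1)^i\beta_{i-1}(x)\tilde f(\varepsilon_i x_i)$ is continuous on the open cylinder. The boundary points of the depth-$M$ partition are exactly the elements of $\Q_M$, yielding continuity of $\Delta^-_M$ on $(0,\tfrac12)\setminus\Q_M$.

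Finally, at $p/q\in\Q_M$ of exact depth $k$, the jump comes solely from $T_k$: the summands with $i<k$ are continuous at $p/q$, and for $i>k$ the factor $x_k\to 0$ inside $\beta_{i-1}$ forces $T_i\to 0$. The determinant identity for the NICF matrix product, $P_{k-1}Q_k-P_k Q_{k-1}=(-1)^k\varepsilon_1\cdots\varepsilon_{k-1}$, gives $\beta_{k-1}(p/q)=\varepsilon_1\cdots\varepsilon_{k-1}/q$. A sign analysis of $x_k=-\varepsilon_k(Q_k x-P_k)/(Q_{k-1}x-P_{k-1})$ near $p/q=P_k/Q_k$ shows that $\varepsilon_k$ flips sign across $p/q$, with $\varepsilon_k^{\text{right}}=(-1)^k\varepsilon_1\cdots\varepsilon_{k-1}=-\varepsilon_k^{\text{left}}$; combined with $\tilde f(\varepsilon_k x_k)\to\varepsilon_k$ as $x_k\to 0^+$ (from \lemref{lm:continuity} and oddness of $\tilde f$), the jump of $T_k$ across $p/q$ is
$$(-1)^k\cdot\frac{\varepsilon_1\cdots\varepsilon_{k-1}}{q}\cdot 2(-1)^k\varepsilon_1\cdots\varepsilon_{k-1}=\frac{2}{q}.$$
The main obstacle is this sign bookkeeping: two factors of $(-1)^k\varepsilon_1\cdots\varepsilon_{k-1}$ must square to $+1$ uniformly in $p/q$ to yield an \emph{increasing} jump of exactly $+2/q$ rather than $\pm 2/q$.
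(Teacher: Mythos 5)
Your derivation of the series and its uniform convergence follows the paper's route: iterate the functional equation of \corref{coro:funct-eq} through the oddness/periodicity of $\Delta^-$ to pass to the nearest-integer algorithm, then kill the remainder. You use the crude bound $|\beta_n(x)|\le x(1/2)^n$ where the paper quotes $|\beta_n|\le\tfrac12(\sqrt2-1)^n$ from \cite{MMY97}, and you supply the boundedness of $\Delta^-$ (via \thmref{thm:1} and \propref{prop:odd/even}) needed to discard the remainder, a point the paper leaves implicit; both are fine. Your jump computation at a rational of exact depth $k$ is also the same mechanism as the paper's (only the $k$-th term jumps, its modulus is $x_0\cdots x_{k-1}=|Q_{k-1}x-P_{k-1}|\to 1/q$, and the sign $(-1)^k\varepsilon_1\cdots\varepsilon_k$ is $-1$ on the left and $+1$ on the right of $p/q$), and your sign bookkeeping does close to an increasing jump of $+2/q$.

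The genuine gap is in the continuity claim. Your assertion that ``the boundary points of the depth-$M$ partition are exactly the elements of $\Q_M$'' is false: one endpoint of each interval of $\mathcal P_M$ has depth $M$ but the other has depth $M+1$. Already for $M=1$ the cells are $(\tfrac{1}{n+1},\tfrac{2}{2n+1})$ and $(\tfrac{2}{2n+1},\tfrac1n)$, and $\tfrac{2}{2n+1}\notin\Q_1$, yet it is a partition boundary point lying in $(0,\tfrac12)\setminus\Q_M$, where the theorem asserts $\Delta^-_M$ is continuous. Your cylinder-interior argument says nothing there. At such a point $r$ one has $x_M\to\tfrac12^-$ from both sides while $a_M,\varepsilon_M$ change across $r$, so continuity of the last summand amounts to checking that $\tilde f(\varepsilon_M x_M)\to 0$ from both sides; this is exactly where $\lim_{y\to\frac12^-}\tilde f(y)=0$ from \lemref{lm:continuity}, combined with oddness of $\tilde f$, is needed --- a part of the lemma you never invoke, which is the tell-tale sign of the omission. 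A smaller caveat (shared, to be fair, with the paper's own terse write-up): at rationals whose orbit passes through $\tfrac12$, e.g.\ $p/q=2/5$, the first $k-1$ digits are \emph{not} the same on the two sides of $p/q$, so your statement ``$\varepsilon_k$ flips while $\beta_{k-1}$ is continuous with value $\varepsilon_1\cdots\varepsilon_{k-1}/q$'' fails literally; the robust formulation is the side-by-side one, that $x_0\cdots x_{k-1}=|Q_{k-1}x-P_{k-1}|\to 1/q$ on each side while $(-1)^k\varepsilon_1\cdots\varepsilon_k=\mp1$ on the left/right, which still yields the jump $+2/q$.
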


\begin{proof}
 Since $\Delta^-(x)$ is 1-periodic and odd on $\R\setminus\Q$ (Corollary \ref{coro:funct-eq}), we have 
\begin{align*}
    \Delta^-(x) &= \varepsilon_0 \Delta^-(\varepsilon_0(x-a_0))=\varepsilon_0 \Delta^-(x_0) = \varepsilon_0\left[f(x_0)-x_0 \Delta^-\left(\frac{1}{x_0}\right)\right]\\ 
    &= \varepsilon_0 f(x_0)-\varepsilon_0x_0 \Delta^-\left(\frac{1}{x_0}-a_1\right) = \tilde{f}(x) - \varepsilon_0x_0 \Delta^-(\varepsilon_1x_1)
\end{align*}
and this yields the expansion (\ref{eq:series}). By \cite[Proposition 1.4]{MMY97} we have $|\beta_{N}(x)|\leq \frac12 \gamma^N$ uniformly, with $\gamma=\sqrt{2}-1$, and since $\tilde{f}$ is uniformly bounded, there exists $C>0$ such that $$|\sum_{i\ge N}(-1)^i\beta_{i-1}(x)\tilde{f}(\varepsilon_ix_i)|\le C\frac{\gamma^N}{1-\gamma}\to 0$$ as $N\to \infty$. Hence the series converges uniformly.

Let $x\in(0,\frac12)$. Then $\varepsilon_0=1$ and $x_0=x$. We consider the set of open intervals $\mathcal P_i$, $i\ge1$, determined by the domain of each branch of $A_{1/2}^i$.

Let $i=1$. Then
$\mathcal P_1 =\{(\frac{1}{n+1},\frac{2}{2n+1}),(\frac{2}{2n+1},\frac{1}{n})\}_{n\ge 2}$, $\Q_1 = \{\frac{1}{n}\}_{n\ge 2}$. The function $\Delta^-_0 = \tilde{f}(\varepsilon_0x_0) = f(x)$ is continuous on $(0,\frac 12)$.
%
The $i$th term of $\Delta^-$ is
$$(-1)^i\varepsilon_0x_0\varepsilon_1x_1\cdots \varepsilon_{i-1}x_{i-1}\tilde{f}(\varepsilon_ix_i),$$
which is continuous on each open interval of $\mathcal P_i$.
One endpoint of an interval of $\mathcal P_i$ has depth $i$ and the other has depth $i+1$.
Let $p/q\in \Q_i\setminus \Q_{i-1}$.
Let $I, I'$ be the intervals of $\mathcal{P}_i$ on the left respectively right sides of $p/q$.
Then the nearest integer continued fraction of $x\in I\cup I'$ has the same $a_j$ for $j=1,\cdots,i$ and $\varepsilon_j$ for $j=1,\cdots,i-1$ with $p/q$.

Since $x_0x_1\cdots x_{i-1}=|Q_{i-1}x-P_{i-1}|$, it is continuous at $p/q$ and its value is $1/q$. 

The $i$th iteration $x_i(x) = A_{1/2}^i(x)$ is continuous and $\lim_{x\to p/q} x_i(x)= 0^+$.
Thus $\lim_{x\to p/q} \tilde{f}(x_i(x))=1$.

Also, we have
$$(-1)^i\varepsilon_1\cdots\varepsilon_i =\begin{cases}-1&\text{if }x\in I,\\
1&\text{if }x\in I'.
\end{cases}
$$
Thus $\lim_{x\to p/q^+} (-1)^i\beta_{i-1}(x)\tilde{f}(x_i(x))=-1/q$ and $\lim_{x\to p/q^-} (-1)^i\beta_{i-1}(x)\tilde{f}(x_i(x))=1/q$.
\end{proof}

\begin{coro}
The series $\Delta^-(x) = \sum (-1)^i \beta_{i-1}(x)\tilde{f}(\varepsilon_ix_i)$ converges to a continuous function on $ (0,\frac 12)\setminus\Q$.
At each rational number $x=\frac{p}{q}$, the function $\Delta^-$ has right and left limits and an increasing jump of $\frac{2}{q}$. 
\end{coro}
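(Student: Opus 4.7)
The plan is to derive the corollary directly from \thmref{jumps} by splitting the argument into two assertions: continuity at each irrational in $(0,\tfrac12)$, and the existence of finite one-sided limits with jump $2/q$ at each rational $p/q$.

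For continuity, I would invoke the uniform convergence $\Delta_M^-\to\Delta^-$ on $(0,\tfrac12)$ from \thmref{jumps}. Since $\Q_M\subset\Q$ for every $M$, each truncation $\Delta_M^-$ is continuous at every irrational $x\in(0,\tfrac12)$, and so is therefore the uniform limit $\Delta^-$.

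For the jump analysis, fix $p/q\in(0,\tfrac12)$ in lowest terms and let $M_0$ be its depth with respect to $A_{1/2}$, so that $P_{M_0}/Q_{M_0}=p/q$ with $Q_{M_0}=q$. I would decompose
$$\Delta^-(x) = \Delta_{M_0}^-(x) + R(x),\qquad R(x) = \sum_{i>M_0}(-1)^i\beta_{i-1}(x)\tilde{f}(\varepsilon_ix_i),$$
and show $R(x)\to 0$ as $x\to p/q$ from either side. For $i>M_0$ we have $|\beta_{i-1}(x)| = |\beta_{M_0}(x)|\cdot x_{M_0+1}\cdots x_{i-1}\leq |\beta_{M_0}(x)|\cdot 2^{-(i-1-M_0)}$, since $x_j<\tfrac12$, so summing yields
$$\sum_{i>M_0}|\beta_{i-1}(x)|\leq 2|\beta_{M_0}(x)| = 2q\,|x-p/q|.$$
Combined with the uniform boundedness of $\tilde{f}$ on $\R\setminus\Q$ (which follows from \lemref{lm:continuity}: continuity on $(0,\tfrac12)\setminus\Q$ with finite one-sided limits at $0^+$ and $\tfrac12^-$, extended by oddness and 1-periodicity), this forces $R(x)\to 0$. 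Hence the one-sided limits of $\Delta^-$ at $p/q$ exist and coincide with those of $\Delta_{M_0}^-$, whose increasing jump is exactly $2/q$ by \thmref{jumps}. The analogous statement at rationals outside $(0,\tfrac12)$ follows by oddness and 1-periodicity.

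I expect the most delicate step to be the tail estimate: the sum $R(x)$ involves infinitely many oscillating terms whose arguments $\varepsilon_ix_i$ can behave erratically as $x\to p/q$, but the factorization of $|\beta_{i-1}(x)|$ through $|\beta_{M_0}(x)|=|Q_{M_0}x-P_{M_0}|$ simultaneously yields summability and the correct vanishing rate, reducing the problem to the jump behavior of the finite truncation already handled in \thmref{jumps}.
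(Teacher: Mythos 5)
Your proposal is correct. The continuity part is exactly the paper's argument: uniform convergence of the truncations $\Delta^-_M$ (each continuous at every irrational since $\Q_M\subset\Q$) gives continuity of $\Delta^-$ on $(0,\tfrac12)\setminus\Q$; the paper spells this out with the usual $\varepsilon/3$ splitting. For the jumps, the paper is terse --- it simply says the statement ``follows from the uniform convergence of $\Delta^-_M$,'' i.e.\ one passes the one-sided limits and the jump $2/q$ of each truncation $\Delta^-_M$, $M\geq$ depth, to the limit function. Your treatment is more explicit and slightly stronger: by fixing $M_0$ equal to the depth of $p/q$ and estimating the whole tail via $|\beta_{i-1}(x)|=|\beta_{M_0}(x)|\,x_{M_0+1}\cdots x_{i-1}\leq |\beta_{M_0}(x)|2^{-(i-1-M_0)}$ and $|\beta_{M_0}(x)|=|Q_{M_0}x-P_{M_0}|=q|x-p/q|$ on the two intervals of $\mathcal P_{M_0}$ adjacent to $p/q$, you show the tail vanishes at the rational at rate $O(q|x-p/q|)$, so the one-sided limits of $\Delta^-$ coincide exactly with those of the finite truncation $\Delta^-_{M_0}$, whose jump Theorem~\ref{jumps} computes. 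This buys a quantitative statement and avoids having to argue that the jump of $\Delta^-_M$ is $2/q$ for every $M\geq M_0$ before letting $M\to\infty$. One small caveat: your parenthetical justification of the uniform boundedness of $\tilde f$ (continuity on $(0,\tfrac12)\setminus\Q$ plus the two endpoint limits) is not by itself a proof of boundedness; the boundedness really comes from the closed form $f=-(W^+(x)+xW^+(Ax))-\Phi(x)$ with $W^+$ uniformly bounded and $\Phi$ extending continuously to $[0,\tfrac12]$, but this fact is already established and used in the proof of Theorem~\ref{jumps}, so citing it there suffices.
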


\begin{proof}
The first statement is a consequence of of the continuity of the partial sums $\Delta^-_M$ on $\mathbb (0,\frac 12)\setminus\Q$ and their uniform convergence to $\Delta^-$.
In fact, if $x_0\in \mathbb (0,\frac 12)\setminus\Q$ and $\varepsilon >0$, by uniform convergence,
there exists $n_0(\varepsilon ) >0$ such that for all $n\ge n_0$ one has 
$|\Delta^-_n(x)-\Delta^- (x)|\le \frac{\varepsilon}{3}$ for all $x\in \mathbb (0,\frac 12)\setminus\Q$. On the other hand, for all $n>n_0$ fixed,  there exists $\delta_n>0$ such that for all $x\in (x_0-\delta, x_0+\delta)\setminus\Q$ one has $|\Delta^-_n(x)-\Delta^-_n(x_0)|\le \frac{\varepsilon}{3}$. Combining the two estimates one obtains 
$ 
|\Delta^- (x)-\Delta^- (x_0)|\le \varepsilon 
$
thus $\lim_{x\rightarrow x_0}\Delta^- (x)=\Delta^- (x_0)$ where the limit is taken on irrational numbers. The statement on the jumps follows from the uniform convergence of $\Delta_M^-$.
\end{proof}

\section{Complex Brjuno and Wilton functions}

Following Theorem \ref{thm:1} we may consider (up to a finite defect) the semi-Brjuno function $B_0$ as the linear combination $(B+W)/2$ of the standard Brjuno and Wilton functions. Both have a complex counterpart, $\mathcal{B},\mathcal{W}:\mathbb{H}\to\C$, such that if $\alpha$ is a Brjuno number, then $\mathrm{Im} \mathcal{B}(\alpha+w)$ converges to $B(\alpha)$ as $w\to 0$ in any domain with a finite order of tangency to the real axis, and similarly for the Wilton functions if $\alpha$ is a Wilton number \cite{MarmiMoussaYoccoz2001,LeeMarmiPetrykiewiczSchindler2023}. In particular, we have that $\tfrac12\mathrm{Im}(\mathcal{B}+\mathcal{W})(\alpha+w)$ converges to $B_0(\alpha)$ up to a finite defect, when $\alpha$ is both Brjuno and Wilton, as $w\to0$ in any domain with a finite order of tangency to the real axis. Moreover, the complex Brjuno and Wilton have explicit closed form expressions, given by \cite[(1.7)]{MarmiMoussaYoccoz2001} \cite[Corrected (1.11)]{LeeMarmiPetrykiewiczSchindler2023}
\begin{align*}
\mathcal{B}(z) &= -\frac{1}{\pi} \sum_{p/q\in \Q}  (p'-q' z)\left[ \mathrm{Li}_2\left(\frac{p'-q'z}{qz-p}\right)-\mathrm{Li}_2\left(-\frac{q'}{q}\right)\right]\\
&\quad +(p''-q'' z)\left[\mathrm{Li}_2\left(\frac{p''-q'' z}{qz-p}\right)-\mathrm{Li}_2\left(-\frac{q''}{q}\right)\right] +\frac{1}{q}\log \frac{q+q''}{q+q'}
\end{align*}
and
\begin{align*}
\mathcal{W}(z) &= -\frac{1}{\pi} \sum_{p/q\in \Q}  (q' z-p')\left[ \mathrm{Li}_2\left(\frac{p'-q'z}{qz-p}\right)-\mathrm{Li}_2\left(-\frac{q'}{q}\right)\right]\\
&\quad +(p''-q'' z)\left[\mathrm{Li}_2\left(\frac{p''-q'' z}{qz-p}\right)-\mathrm{Li}_2\left(-\frac{q''}{q}\right)\right] +\frac{1}{q}\log \frac{(q+q')(q+q'')}{q^2},
\end{align*}
where $\mathrm{Li}_2$ is the dilogarithm
\cite{Zagier2007} and 
$[\tfrac{p'}{q'},\tfrac{p''}{q''}]$ is the Farey interval such that $\tfrac{p}{q}=\tfrac{p'+p''}{q'+q''}$ (with the convention that $p'=p-1$, $q'=1$, $p''=1$, $q''=0$ if $q=1$). It follows that
\begin{align*}
    \frac12 (\mathcal{B}+\mathcal{W}) &= -\frac{1}{\pi} \sum_{p/q\in \Q}  (p''-q'' z)\left[ \mathrm{Li}_2\left(\frac{p''-q''z}{qz-p}\right)-\mathrm{Li}_2\left(-\frac{q''}{q}\right)\right]   +\frac{1}{q}\log \frac{q+q''}{q}.
\end{align*}

\Addresses


\begin{thebibliography}{99}


\bibitem[BDBLS05]{BDBLS05}
L. Báez-Duarte, M. Balazard, B. Landreau, and E. Saias, Étude de l’autocorrélation multiplicative de la fonction ‘partie fractionnaire’. The Ramanujan Journal (2005), 9(1), 215-240.

\bibitem[BM13]{BM13}
M. Balazard and B. Martin, Sur l'autocorrélation multiplicative de la fonction "partie fractionnaire" et une fonction définie par J. R. Wilton, arXiv:1305.4395v1.

\bibitem[BG01]{BG01}
A. Berretti and G. Gentile, 
Bryuno function and the standard map. 
Communications in Mathematical Physics (2001), 220, 623--656

\bibitem[Brj65]{Brjuno1965}
A. D. Brjuno, On convergence of transforms of differential equations to the normal form (Russian), Dokl. Akad. Nauk SSSR (Russian Academy of Sciences) 165 (1965), 987–989.

\bibitem[Da94]{Davie1994}
A.M. Davie, The critical function for the semistandard map, Nonlinearity (1994), 7(1), 219--229

\bibitem[Koe84]{Koenigs1884}
G. Koenigs, Recherches sur les intégrales de certaines équations fonctionnelles, Ann. Sci. \'ENS, 3\`eme s\'erie, tome 1 (1884), 3-41.


\bibitem[KN01]{KN01}
C. Kraaikamp and H. Nakada, 
On a problem of Schweiger concerning normal numbers, 
J. Number Theory, 86(2) (2001) 330–340

\bibitem[LMPS23]{LeeMarmiPetrykiewiczSchindler2023} S.B. Lee, S. Marmi, I. Petrykiewicz, T.I. Schindler, Regularity properties of $k$-Brjuno and Wilton functions, Aequationes Mathematicae, 98 (2024) 13-85 and  Correction to: Regularity properties of k-Brjuno and Wilton functions, Aequationes Mathematicae, 98 (2024) 349-350

\bibitem[LM21]{LM21}
S.B. Lee and S. Marmi,
The Brjuno functions of the by-excess, odd, even and odd-odd continued fractions and their regularity properties, arXiv:2111.13553.

\bibitem[LMNN10]{LuzziMarmiNakadaNatsui2010} L. Luzzi, S. Marmi, H. Nakada, R. Natsui, Generalized Brjuno functions associated to $\alpha$-continued fractions, Journal of Approximation Theory, 162(1) (2010) 24-41

\bibitem[MMY97]{MMY97}{
{S. Marmi, P. Moussa and J-C. Yoccoz},
The Brjuno functions and their regularity properties,
{Comm. Math. Phys.},
{\bf 186}(2) (1997) {265--293}.
}

\bibitem[MMY01]{MarmiMoussaYoccoz2001} S. Marmi, P. Moussa, J.-C. Yoccoz, Complex Brjuno functions. Journal of the American Mathematical Society 14(4) (2001) 783-841

\bibitem[Sie42]{Siegel1942}
C.L. Siegel, Iteration of analytic functions. Annals of Mathematics 43(4) (1942) 607-612

\bibitem[Wil33]{Wilton1933} J.R. Wilton, An approximate functional equation with applications to a problem of Diophantine approximation. Journal für die reine und angewandte Mathematik 169 (1933) 219-237

\bibitem[Yoc95]{Yoccoz1995}
J.C. Yoccoz, Th\'eor\`eme de Siegel, nombres de Bruno et polyn\^omes quadratiques, Ast\'erisque, 231 (1995), 3–88.

\bibitem[Yoc02]{Yoccoz2002}
J.C. Yoccoz, Analytic linearization of circle diffeomorphisms,
in Dynamical Systems and Small Divisors: Lectures given at the CIME Summer School held in Cetraro, Italy, June 13-20, 1998,
Lecture Notes in Mathematics, 1784 (2002),
125--173.

\bibitem[Za07]{Zagier2007}
D. Zagier, The dilogarithm function, in 
Frontiers in Number Theory, Physics, and Geometry II: On Conformal Field Theories, Discrete Groups and Renormalization, (2007), 
3--65.
    
\end{thebibliography}
\end{document}